\newtheorem{thm}{Theorem}[subsection]
\newtheorem*{thm*}{Theorem}
\newtheorem{lem}[thm]{Lemma}
\newtheorem{fact}[thm]{Fact}
\newtheorem{prop}[thm]{Proposition}
\newtheorem*{prop*}{Proposition}
\newtheorem{cor}[thm]{Corollary}
\theoremstyle{definition}
\newtheorem{remark}[thm]{Remark}
\newtheorem{question}[thm]{Question}
\def\cal{\mathcal}
\newcommand\cU{{\cal U}}
\newcommand\bbN{{\mathbf N}}
\begin{document}


\title{On the nonexistence of F\o lner sets}
\author{Isaac Goldbring}
\thanks{Goldbring's work was partially supported by NSF CAREER grant DMS-1349399.}

\address {Department of Mathematics, University of California, Irvine, 340 Rowland Hall (Bldg.\# 400), Irvine, CA, 92697-3875.}
\email{isaac@math.uci.edu}
\urladdr{http://www.math.uci.edu/~isaac}

\begin{abstract}
We show that there is $n\in \mathbf N$, a finite system $\Sigma(\vec x,\vec y)$ of equations and inequations having a solution in some group, where $\vec x$ has length $n$, and $\epsilon>0$ such that:  for any group $G$ and any $\vec a\in G^n$, if the system $\Sigma(\vec a,\vec y)$ has a solution in $G$, then there is \emph{no} $(\vec a,\epsilon)$-F\o lner set in $G$.  The proof uses ideas from model-theoretic forcing together with the observation that no amenable group can be existentially closed.  Along the way, we also observe that no existentially closed group can be exact, have the Haagerup property, or have property (T).  Finally, we show that, for $n$ large enough and for $\epsilon$ small enough, the existence of $(F,\epsilon)$-F\o lner sets, where $F$ has size at most $n$, cannot be expressed in a first-order way uniformly in all groups.
\end{abstract}

\maketitle

\section{Introduction}

Recall that, for $G$ a group, a finite subset $F$ of $G$, and $\epsilon>0$, a finite subset $H$ of $G$ is called a \textbf{$(F,\epsilon)$-F\o lner set} if, for all $g\in F$, we have that $|gH\triangle H|<\epsilon |H|$.  If $\vec a=(a_1,\ldots,a_n)$ is an $n$-tuple from $G$, we refer to $(\{a_1,\ldots,a_n\},\epsilon)$-F\o lner subsets of $G$ as $(\vec a,\epsilon)$-F\o lner subsets of $G$.  The group $G$ is said to be \textbf{amenable} if, for every finite subset $F$ of $G$ and every $\epsilon>0$, there is a $(F,\epsilon)$-F\o lnet subset of $G$.


The following is the main result of our note:

\begin{thm}
There is $n\in \mathbf N$, a finite system $\Sigma(\vec x,\vec y)$ of equations and inequations having a solution in some group, where $\vec x$ has length $n$, and $\epsilon>0$ such that:  for any group $G$ and any $\vec a\in G^n$, if the system $\Sigma(\vec a,\vec y)$ has a solution in $G$, then there is \emph{no} $(\vec a,\epsilon)$-F\o lner set in $G$.
\end{thm}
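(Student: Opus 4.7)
The plan is to argue by contradiction via model-theoretic (Robinson) forcing, combined with the observation --- proven separately in this note --- that no amenable group can be existentially closed. Suppose the theorem fails: then for every finite system $\Sigma(\vec x, \vec y)$ of equations and inequations solvable in some group, and every $\epsilon > 0$, there is a group $H$ and a tuple $\vec a \in H^{|\vec x|}$ such that $\Sigma(\vec a, \vec y)$ is solvable in $H$ and $H$ admits an $(\vec a, \epsilon)$-F\o lner set. A key preliminary observation is that, for each fixed $N$, the existence of an $(\vec a, \epsilon)$-F\o lner set of cardinality $N$ for $\vec a$ can be expressed as a finite existential formula in $\vec a$: it is a finite disjunction, over the possible choices of ``exceptional'' coordinates, of finite systems of equations $a_j h_i = h_{\sigma(j,i)}$ together with inequations $h_i \neq h_j$ for $i \neq j$.

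I would set up Robinson forcing in the language of groups augmented by countably many constants $\{c_i\}_{i \in \mathbf N}$, where a condition is a finite system of equations and inequations in these constants that is satisfiable in some group. Two density properties suffice. \emph{Density A}: for any condition $p$ and any existential formula $\phi(\vec c)$ such that $p \cup \{\phi\}$ is satisfiable in some group, there is an extension $q \supseteq p$ forcing $\phi$; this is standard and is achieved by introducing fresh Henkin constants for the existential witnesses. \emph{Density B}: for any condition $p$, any subtuple $\vec c$ of the constants appearing in $p$, and any $\epsilon > 0$, there is an extension $q \supseteq p$ containing quantifier-free witnesses that fresh constants $h_1, \ldots, h_N$ form an $(\vec c, \epsilon)$-F\o lner set. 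Density B is exactly where the failure hypothesis is applied: viewing $p$ as a finite system $\Sigma(\vec c, \vec d)$ in all of its constants, the hypothesis applied to this $\Sigma$ produces a group $H$ and a tuple $\vec a$ in which $\Sigma(\vec a, \vec y)$ is solvable (by some $\vec b \in H$) and an $(\vec a, \epsilon)$-F\o lner set exists; reinterpreting the constants of $p$ as $\vec a, \vec b$ and adjoining fresh constants for the F\o lner elements yields the required $q$.

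From Density A, together with the fact that the theory of groups is universal and hence inductive, the generic model $G$ of this forcing is existentially closed. From Density B, together with a standard Henkin closure ensuring that every element of $G$ is named by some constant, every finite subset of $G$ admits an $\epsilon$-F\o lner set for every $\epsilon > 0$, so $G$ is amenable. This gives an amenable existentially closed group, contradicting the observation. The main obstacle I foresee is the precise formalization of Density B: one must explicitly write ``F\o lner set of cardinality $N$'' as a finite existential formula (a disjunction over the $\lfloor \epsilon k N \rfloor$-element subsets of exceptional coordinates, where $k = |\vec c|$), and one must justify that the tuple $\vec a$ supplied by the failure hypothesis --- which is a priori unrelated to the interpretation of the constants of $p$ in the generic --- can nonetheless be used to reinterpret those constants in the ambient group $H$. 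The latter goes through because the hypothesis is applied to $p$ viewed abstractly as a finite system in $|\vec c| + |\vec d|$ formal variables, which is precisely what ``solvable in some group'' means.
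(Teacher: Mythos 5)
Your proposal is correct and takes essentially the same route as the paper: both argue that if the conclusion failed, amenability would be enforceable, which together with the enforceability of being e.c.\ would produce an amenable existentially closed group, contradicting the proposition that no e.c.\ group is amenable. The only difference is one of packaging --- where the paper extracts the existential obstruction $\psi(\vec x)$ by citing Hodges' characterization of when a condition forces a $\forall\bigvee\exists$-sentence (Theorem 3.4.4 of \emph{Building models by games}), you re-prove the relevant direction of that characterization by hand via your Density B argument.
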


We will actually prove something a bit more general, namely that the set of such systems is dense in a certain sense to be made precise below.

It is natural to ask whether or not such a system could actually be equivalent (in all groups) to the nonexistence of a F\o lner set.  We will remark that, \emph{asymptotically} (meaning for $n$ large enough and $\epsilon$ small enough), no such system can exist.

The proof of the main theorem follows in a rather straightforward way from the observation that no \emph{existentially closed (e.c.) group} can be amenable together with some standard facts from \emph{model-theoretic forcing}.  For the sake of the reader not familiar with these concepts, we will review them below.  We will however assume that the reader is familiar with some basic logic; an introduction aimed towards group theorists can be found in \cite{hodges}.

We also take the opportunity to observe that e.c.\ groups cannot have some of the other mainstream properties studied in group theory nowadays, namely exactness, the Haagerup property, or property (T).  We also take up the corresponding question for so-called \emph{locally universal} groups and settle this question for some of these properties.

\section{Existentially closed groups are not amenable}

In the rest of this note, $L$ denotes the language of groups and $T$ denotes the $L$-theory of groups.

\subsection{Preliminaries on existentially closed and locally universal groups}

Recall that a group $G$ is said to be \textbf{existentially closed (e.c.)} if, for any finite system $p(\vec x)$ of equations and inequations with coefficients from $G$, if there is a solution to $p(\vec x)$ in an extension of $G$, then there is a solution to $p(\vec x)$ in $G$.  (In model-theoretic terms:  if there is an existential $L$-formula $\varphi(\vec x)$ with parameters from $G$ that is satisfiable in an extension of $G$, then $\varphi(\vec x)$ is satisfiable in $G$ itself.)    

It is useful to rephrase being e.c.\ in the following way:  

\begin{fact}\label{ecembed}
A group $G$ is e.c.\ if and only if:  whenever $G$ is a subgroup of $H$, then there is an ultrapower $G^\cU$ of $G$ and an embedding $i:H\hookrightarrow G^\cU$ such that $i|G$ is the diagonal embedding of $G$ into $G^\cU$. 
\end{fact}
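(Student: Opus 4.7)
The plan is to handle the two directions of the biconditional separately. The backward direction is essentially a one-line application of the fact that the diagonal embedding $G \hookrightarrow G^\cU$ is elementary, while the forward direction is a standard compactness-plus-ultraproduct argument applied to the quantifier-free type of $H$ over $G$.

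For the backward direction, assume the ultrapower condition holds, and suppose $\varphi(\vec x)$ is an existential $L$-formula with parameters from $G$ that is satisfied by some tuple in an extension $H \supseteq G$. The hypothesis yields an embedding $i : H \hookrightarrow G^\cU$ fixing $G$ diagonally, and existential formulas are preserved under embeddings, so $\varphi$ is also satisfied in $G^\cU$. But the diagonal embedding $G \hookrightarrow G^\cU$ is elementary by \L o\'s's theorem, so $\varphi$ must already be satisfied in $G$; this is precisely the e.c. condition.

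For the forward direction, enumerate $H$ as $(h_\alpha)_{\alpha \in I}$, introducing a fresh variable $x_\alpha$ for each element. Let $p(x_\alpha : \alpha \in I)$ be the set of all quantifier-free $L$-formulas with parameters in $G$ that are satisfied in $H$ by the corresponding tuple, together with the clauses $x_g = g$ for each $g \in G$. Since quantifier-free $L$-formulas in the language of groups are Boolean combinations of equations, any finite subset of $p$ amounts to a finite system of equations and inequations with parameters in $G$ having a solution in $H$; because $G$ is e.c., this system also has a solution in $G$. A standard ultraproduct construction -- index by the finite subsets of $p$, equip the index set with an ultrafilter $\cU$ containing each upward-cofinal set $\{p_1 : p_0 \subseteq p_1\}$, and use the e.c. witnesses coordinate-wise -- realizes $p$ in $G^\cU$, say by $(h'_\alpha)$. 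Setting $i(h_\alpha) = h'_\alpha$ then gives a well-defined injective homomorphism: well-definedness and injectivity follow because $p$ encodes all atomic and negated atomic relations among the $h_\alpha$, while the clauses $x_g = g$ force $i$ to agree with the diagonal embedding on $G$.

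The only real obstacle is a bookkeeping matter -- organizing the ultrafilter so that the finite-satisfiability witnesses glue together into a single tuple in $G^\cU$ -- and this is handled by the standard construction sketched above (equivalently, by choosing any sufficiently saturated ultrapower, e.g.\ one coming from a regular ultrafilter on a set of cardinality $|H|$). All the model-theoretic content of the argument is then captured by the observation that every finite subsystem of $p$ is an existential statement over $G$ witnessed in $H$, hence witnessed in $G$ by existential closedness.
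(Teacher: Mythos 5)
Your argument is correct, and the paper itself states this as a Fact without proof, so there is nothing to compare it against; what you give is the standard argument. Both directions are sound: the backward direction correctly combines upward preservation of existential formulas under embeddings with elementarity of the diagonal map (\L o\'s), and the forward direction correctly realizes the quantifier-free diagram of $H$ over $G$ in an ultrapower indexed by finite subsets of the diagram, using existential closedness to witness each finite piece in $G$ and the upward-cone ultrafilter to glue the witnesses.
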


We will need the following facts about e.c.\ groups; see, for example, \cite[Theorem 1.8a and Theorem 5.8]{higmanscott}.

\begin{fact}\label{ecfacts}

\

\begin{enumerate}
\item E.c.\ groups are not finitely generated.
\item Every finitely generated group with solvable word problem embeds into every e.c.\ group.
\end{enumerate}
\end{fact}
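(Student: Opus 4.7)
The plan is to argue by contradiction, building an existentially closed amenable group to contradict a preliminary observation. The observation, to be established first, is that no e.c.\ group can be amenable: by Fact~\ref{ecfacts}(2) the free group $F_2$---which is finitely generated with solvable word problem---embeds into every e.c.\ group, and since $F_2$ is non-amenable while amenability passes to subgroups, no e.c.\ group can be amenable.

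Now suppose for contradiction that the theorem fails. Then for every $n$, every consistent finite system $\Sigma(\vec x,\vec y)$ with $|\vec x|=n$, and every $\epsilon>0$, there is a group $G$ and a tuple $\vec a\in G^n$ such that $\Sigma(\vec a,\vec y)$ is solvable in $G$ \emph{and} $G$ admits a $(\vec a,\epsilon)$-F\o lner set. Under this hypothesis I construct a countable group $G^*$ that is simultaneously existentially closed and amenable, the desired contradiction.

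The construction is a finite-forcing-style back-and-forth. Work in the language $L(C)$ of groups augmented by countably many fresh constants $C=\{c_1,c_2,\ldots\}$; a \emph{condition} is a finite set of atomic and negated atomic $L(C)$-sentences consistent with the theory $T$ of groups. I build an increasing chain $p_0\subseteq p_1\subseteq\cdots$ starting from $p_0=\emptyset$ by interleaving three types of tasks. An \emph{E-task}, indexed by a pair $(\vec c,\varphi)$ with $\vec c$ from $C$ and $\varphi(\vec x,\vec y)$ an existential $L$-formula, adjoins witnesses $\varphi(\vec c,\vec d)$ for fresh $\vec d$ whenever $p_{\text{cur}}\cup\{\exists\vec y\,\varphi(\vec c,\vec y)\}$ is $T$-consistent. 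A \emph{D-task} decides each atomic $L(C)$-sentence by appending it or its negation, whichever preserves consistency. The heart of the argument is the \emph{A-task}, indexed by a pair $(\vec c,\epsilon)$ with $\vec c$ from $C$ and $\epsilon$ a positive rational: view $p_{\text{cur}}$ as a finite system $\Sigma(\vec x,\vec y)$ with $\vec x$ corresponding to $\vec c$ and $\vec y$ to the remaining constants; the hypothesis yields a group $G$ and tuples $\vec a,\vec b$ with $\Sigma(\vec a,\vec b)$ true in $G$ together with an $(\vec a,\epsilon)$-F\o lner set $\{h_1,\ldots,h_m\}\subseteq G$. For \emph{fixed} $m$ the assertion that $(d_1,\ldots,d_m)$ is a size-$m$ $(\vec c,\epsilon)$-F\o lner set is a quantifier-free $L(C)$-formula (a Boolean combination of equalities among group words in $\vec c,\vec d$), so the specific conjunction of literals witnessed by $h_1,\ldots,h_m$ in $G$ can be appended to $p_{\text{cur}}$ via fresh constants $\vec d$ naming the $h_i$, consistency preserved.

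Setting $p_\infty=\bigcup_i p_i$, the D-steps make $p_\infty$ the complete diagram of a countable group $G^*$. The E-steps plus a compactness argument make $G^*$ existentially closed: any existential formula over $G^*$ satisfiable in some extension is consistent with every $p_i$ and hence receives a witness at the stage when its E-task is processed. The A-steps make $G^*$ amenable: every finite tuple of $G^*$ is named by some tuple of constants, and a F\o lner set for every positive rational $\epsilon$ was explicitly adjoined. This yields the desired contradiction. (Starting the construction from $p_0=\Sigma_0$ in place of $\emptyset$ produces the density refinement alluded to after the theorem.) The main obstacle is the A-step: one must convert the abstract existence of a F\o lner set in some group into a concrete consistent extension of $p_{\text{cur}}$, which works precisely because, with size $m$ fixed, the F\o lner condition is quantifier-free, so the specific witnessed disjunct can be adjoined as literals.
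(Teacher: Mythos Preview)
Your proposal does not address the stated fact at all. Fact~\ref{ecfacts} asserts two structural properties of e.c.\ groups---that they are never finitely generated, and that every finitely generated group with solvable word problem embeds into each of them---and the paper does not prove either claim; it simply cites Higman--Scott. Your write-up instead \emph{assumes} Fact~\ref{ecfacts}(2) in its opening paragraph (to embed $\mathbb{F}_2$ and deduce that no e.c.\ group is amenable) and then proves the paper's \emph{main theorem}: the existence of $n$, $\Sigma$, and $\epsilon$ such that solvability of $\Sigma(\vec a,\vec y)$ precludes $(\vec a,\epsilon)$-F{\o}lner sets. So as a proof of the assigned statement, there is nothing here.

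As a proof of the main theorem, what you wrote is essentially correct and follows the same strategy as the paper---use non-amenability of e.c.\ groups together with a forcing argument---except that you unroll the forcing machinery by hand rather than invoking the black-box Facts~\ref{forcefacts} and~\ref{fundfact}. Two minor technical points would need tightening: in the E-task you should adjoin a specific conjunction of literals (a disjunct of the DNF of the quantifier-free matrix of $\varphi$) rather than ``$\varphi(\vec c,\vec d)$'' itself, since conditions consist of literals; and you should note that arbitrary parameters from $G^*$ are words in $C$, so existential formulae over $G^*$ reduce to the $L(C)$-formulae your E-tasks enumerate. With those fixes the construction goes through, but it still proves the wrong statement.
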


Fact \ref{ecembed} implies that e.c.\ groups are \textbf{locally universal}, where a group $G$ is said to be locally universal if every group embeds into some ultrapower of $G$.\footnote{To see this, suppose that $G$ is e.c.\ and $H$ is an arbitrary group.  Then since $G\subseteq G\times H$, it follows that $G\times H$ embeds into an ultrapower of $G$, whence so does $H$.} 

Below we will need the following lemma, which is the analog of Fact \ref{ecfacts}(2) for locally universal groups.  The proof is nearly identical to the aforementioned fact, but we include it here for the sake of the reader.

\begin{lem}\label{locuniv}
Every finitely generated group with solvable word problem embeds into every locally universal group.
%
%
\end{lem}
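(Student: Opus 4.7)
The plan is to mirror the proof of Fact \ref{ecfacts}(2), replacing the use of existential closedness by an ultrapower argument enabled by local universality. The key non-model-theoretic input is the Boone--Higman theorem: a finitely generated group has solvable word problem if and only if it embeds into a simple subgroup of a finitely presented group.

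Given $H$ finitely generated with solvable word problem, I would first invoke Boone--Higman to obtain a finitely presented group $K=\langle k_1,\ldots,k_m \mid r_1,\ldots,r_l\rangle$ and a simple subgroup $S\leq K$ with $H\leq S$. Fix a nontrivial element $s\in S$, expressed as a word $s=u(k_1,\ldots,k_m)$ in the generators of $K$. Since $G$ is locally universal, $K$ embeds into some ultrapower $G^\cU$; let $\bar g_i\in G^\cU$ denote the image of $k_i$. Then the existential sentence
\[
\exists \vec x \left( \bigwedge_{j=1}^{l} r_j(\vec x)=1 \;\wedge\; u(\vec x)\neq 1 \right)
\]
holds in $G^\cU$, witnessed by the tuple $(\bar g_1,\ldots,\bar g_m)$. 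Because the diagonal embedding $G\hookrightarrow G^\cU$ is elementary (by \L os's theorem), the sentence also holds in $G$, so we may choose $g_1,\ldots,g_m\in G$ satisfying all of the relations $r_j(\vec g)=1$ together with $u(\vec g)\neq 1$.

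To finish, define $\psi:K\to G$ by $k_i\mapsto g_i$; this is a well-defined homomorphism because the defining relations of $K$ are satisfied in $G$. Its restriction $\psi|_S$ is nontrivial since $\psi(s)=u(\vec g)\neq 1$, and by simplicity of $S$ the kernel of $\psi|_S$ must be trivial. Hence $\psi$ embeds $S$, and therefore $H$, into $G$. The main conceptual input is Boone--Higman; once that is in hand, the passage from ``e.c.'' to ``locally universal'' is essentially automatic, because the condition to be transferred is a single existential sentence, which is preserved in both directions between $G$ and its ultrapower.
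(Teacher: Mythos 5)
Your proof is correct and follows essentially the same route as the paper's: invoke Boone--Higman to embed the group into a simple subgroup of a finitely presented group, encode the presentation plus the nontriviality of a fixed element of the simple subgroup as a single existential sentence, transfer it from the ultrapower down to the locally universal group via \L o\'s, and use simplicity to upgrade the resulting nontrivial homomorphism to an embedding. The only differences are notational.
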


\begin{proof}
Suppose that $G$ is a finitely generated group with solvable word problem and that $H$ is a locally universal group; we show that $G$ embeds into $H$.  By a celebrated theorem of Higman and Boone \cite[Theorem 7.4]{LS}, there is a finitely presented group $K$ with a simple subgroup $S$ such that $G$ embeds into $S$.  It thus suffices to show that $S$ embeds into $H$.  Let $K=(\vec a \ | \ \vec w)$ be a finite presentation of $K$, where $\vec a=(a_1,\ldots,a_m)$ and $\vec w=(w_1,\ldots,w_n)$.  Fix $s\in S\setminus \{e\}$ and let $w(x_1,\ldots,x_m)$ be a word such that $s=w(a_1,\ldots,a_m)$.  Let $\sigma$ be the sentence
$$\exists \vec x\left(\bigwedge_{i=1}^n w_i(\vec x)=e \wedge w(\vec x)\not=e\right).$$  Since $H$ is locally universal, $K$ embeds into $H^\cU$, whence $H^\cU\models \sigma$ and thus $H\models \sigma$.  Take $b_1,\ldots,b_n\in H$ that witness the truth of $\sigma$.  Then the mapping $a_i\mapsto b_i$ yields a homomorphism $K\to H$ whose restriction to $S$ is not identically the identity of $H$.  Since $S$ is simple, we have that the aformentioned map restricts to an embedding of $S$ into $H$, as desired. 
\end{proof}

As a special case of the previous lemma, we have that every simple finitely presented group embeds into every locally universal group; this simpler statement does not need the aformentioned result of Higman and Boone and follows from a simpler version of the previous proof.

Notice that the previous lemma seemingly yields a generalization of Fact \ref{ecfacts}(2).  However, it is not clear to us if Lemma \ref{locuniv} truly is a strengthening of Fact \ref{ecfacts}(2):

\begin{question}
Does every locally universal group contain an e.c.\ group?
\end{question}

It is known that no e.c.\ group embeds into a finitely presented group (\cite[Corollary 6.10]{higmanscott}); thus, in connection with the previous question, it is natural to ask:

\begin{question}
Can a locally universal group every be finitely presented?
\end{question}

\subsection{Some properties that existentially closed groups never have}

The following proposition is central to the main theorem of this note:

\begin{prop}
If $G$ is an e.c.\ group, then $G$ is not amenable.
\end{prop}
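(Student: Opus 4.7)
The plan is to embed a non-amenable finitely generated group into $G$ and then invoke hereditariness of amenability. The canonical choice is the free group $F_2$ on two generators: it is finitely generated, it has trivially solvable word problem (a word is trivial in $F_2$ iff its freely reduced form is empty), and it is non-amenable by the classical paradoxical decomposition argument of von Neumann.

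First I would invoke Fact \ref{ecfacts}(2): since $F_2$ is finitely generated with solvable word problem, it embeds into every e.c.\ group, and in particular into $G$. Next I would appeal to the standard fact that amenability is inherited by subgroups; hence if $G$ were amenable, so would be the copy of $F_2$ inside $G$, contradicting its non-amenability.

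There is essentially no obstacle beyond the theorems already cited; all the work has been absorbed into Fact \ref{ecfacts}(2), which itself rests on the Higman embedding theorem. It is worth noting that the same argument, using Lemma \ref{locuniv} in place of Fact \ref{ecfacts}(2), yields the a priori stronger statement that no locally universal group is amenable, which presumably motivates the remainder of this subsection. One could instead try to argue purely model-theoretically from Fact \ref{ecembed}, attempting to derive a contradiction from the hypothetical amenability of $G$ together with the existence of the embedding $i : H \hookrightarrow G^\cU$ for a suitably chosen $H$, but since amenability is not preserved by ultrapowers and is not first-order expressible, such an approach would be more delicate and is not needed here.
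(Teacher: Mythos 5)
Your proof is correct and is essentially identical to the paper's: both embed $\mathbb F_2$ into $G$ via Fact \ref{ecfacts}(2) (using that $\mathbb F_2$ is finitely generated with solvable word problem) and then conclude from the non-amenability of $\mathbb F_2$ together with the fact that amenability passes to subgroups. Your closing remark about the locally universal case via Lemma \ref{locuniv} also matches the corollary the paper draws immediately afterwards.
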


\begin{proof}
By Fact \ref{ecfacts}(2) and the fact that $\mathbb F_2$ has solvable word problem (\cite[Corollary 1.3]{LS}), $\mathbb F_2$ embeds into $G$, whence $G$ is not amenable (\cite[Corollary G.3.5]{Tbook}).
\end{proof}

Although it will not be necessary for our main theorem, it is interesting to note the extension of the previous proposition\footnote{In some sense, this proposition shows that the discrete version of the Connes Embedding Problem is false.}:

\begin{cor}
If $G$ is locally universal, then $G$ is not amenable.
\end{cor}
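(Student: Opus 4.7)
The plan is to mimic the proof of the preceding proposition almost verbatim, substituting Lemma \ref{locuniv} for Fact \ref{ecfacts}(2). The key point is that the only property of e.c.\ groups used in the previous proof was that every finitely generated group with solvable word problem embeds into them, and Lemma \ref{locuniv} gives exactly the same conclusion for locally universal groups.

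First, I would invoke Lemma \ref{locuniv} together with the fact that $\mathbb F_2$ is finitely generated and has solvable word problem (\cite[Corollary 1.3]{LS}) to conclude that $\mathbb F_2$ embeds into $G$. Second, since amenability passes to subgroups and $\mathbb F_2$ is not amenable (\cite[Corollary G.3.5]{Tbook}), I would conclude that $G$ is not amenable either.

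There is no real obstacle here: all the work is already encapsulated in Lemma \ref{locuniv}, and the argument is a two-line appeal. The only conceptual content worth flagging is the footnoted remark that this gives a ``discrete Connes embedding''-style phenomenon, but that is a comment rather than part of the proof.
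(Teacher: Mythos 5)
Your proposal is correct and is essentially identical to the paper's own argument: the paper's proof reads ``Argue just as in the previous proof, this time applying Lemma \ref{locuniv},'' which is precisely the substitution you describe. Nothing further is needed.
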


\begin{proof}
Argue just as in the previous proof, this time applying Lemma \ref{locuniv}.
\end{proof}

\begin{remark}
If one prefers not to use the above theorem of Higman and Boone in the previous proof, one can instead use the fact that simple, finitely presented, nonamenable groups exist, e.g. Thompson's group $T$ \cite{Thompson} or the Burger-Mozes groups \cite{BM}.
\end{remark}

\begin{remark}
Since $\mathbb F_2$ is exact, the previous proof shows that no amenable group can even be locally universal for the class of exact groups.\footnote{We thank David Kerr for asking us if the content of this remark was true.}
\end{remark}

Speaking of exactness:

\begin{prop}
If $G$ is a locally universal group, then $G$ is not exact.
\end{prop}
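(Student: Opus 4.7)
The plan is to mimic the proof of the previous proposition on non-amenability. Specifically, I would exhibit a finitely generated group $H$ with solvable word problem that fails to be exact, embed $H$ into $G$ via Lemma \ref{locuniv}, and then invoke hereditariness of exactness for discrete groups (Kirchberg--Wassermann) to conclude that $G$ itself cannot be exact.

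The crux of the argument is thus the existence of an appropriate non-exact $H$. For this I would cite the graphical small cancellation constructions of Arzhantseva--Osajda and Osajda: these produce finitely presented groups whose Cayley graphs contain a coarsely embedded sequence of expanders, and such coarse containment obstructs Yu's property $A$ and hence exactness. Because the resulting groups arise as graphical $C'(1/6)$ small cancellation groups, they satisfy a generalized Dehn algorithm and in particular have solvable word problem, making Lemma \ref{locuniv} applicable.

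The main obstacle is locating a sufficiently explicit non-exact group of the required type; once this geometric-group-theoretic input is in hand, the argument is a routine variation on the previous corollary. As an alternative route (which one might hope would avoid appealing to such constructions), one could try to show that exactness is preserved under group ultrapowers, and then derive a contradiction from the fact that every group, in particular some non-exact one, embeds into an ultrapower of any locally universal $G$. However, the ultrapower-stability of exactness for discrete groups seems considerably more delicate than the direct approach via Lemma \ref{locuniv}, so I would pursue the latter.
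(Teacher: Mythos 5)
Your proof follows essentially the same route as the paper: reduce via Lemma \ref{locuniv} and heredity of exactness to exhibiting a finitely generated non-exact group with solvable word problem. The paper cites the Gromov monster for this input where you cite the Arzhantseva--Osajda/Osajda graphical small cancellation constructions (which, note, are finitely generated with decidable word problem but not finitely presented --- harmless, since Lemma \ref{locuniv} only requires finite generation); this is arguably the more careful choice of witness, but the argument is the same.
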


\begin{proof}
By Fact \ref{locuniv} together with the fact that exactness is preserved under subgroup, it suffices to find a non-exact finitely generated group with solvable word problem.  Such groups exist, e.g. the Gromov monster \cite{gromov}.
\end{proof}

%
%

Another generalization of amenability, the \emph{Haagerup property}, can also never be a property of a locally universal group:

\begin{prop}
If $G$ is a locally universal group, then $G$ does not have the Haagerup property.
\end{prop}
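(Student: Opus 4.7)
The strategy will mirror the preceding proof for exactness. Two facts are needed: (i) the Haagerup property is inherited by subgroups, and (ii) there exists a finitely generated group with solvable word problem that does \emph{not} have the Haagerup property. Granting these, Lemma \ref{locuniv} embeds such a group into any locally universal $G$, and (i) then forces $G$ to fail the Haagerup property.

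For (i), this is a standard fact: if $H\leq G$ and $G$ admits a proper conditionally negative definite function, then its restriction to $H$ is again proper and conditionally negative definite, so $H$ has the Haagerup property. For (ii), the cleanest source of examples comes from infinite groups with Kazhdan's property (T), since an infinite group cannot simultaneously have (T) and Haagerup. A concrete choice is $\operatorname{SL}_3(\mathbf Z)$: it is finitely presented (a fortiori finitely generated), residually finite (hence, being finitely presented, has solvable word problem), and has property (T), so it fails the Haagerup property.

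Putting these together: by Lemma \ref{locuniv}, $\operatorname{SL}_3(\mathbf Z)$ embeds into $G$; if $G$ had the Haagerup property, then so would its subgroup $\operatorname{SL}_3(\mathbf Z)$, contradicting property (T) for this infinite group.

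There is no real obstacle here, since all the inputs are classical. The only mild subtlety is choosing a witness for (ii) with solvable word problem rather than merely finite generation; using a finitely presented residually finite (T)-group like $\operatorname{SL}_3(\mathbf Z)$ sidesteps this cleanly.
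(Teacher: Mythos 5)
Your proof is correct and follows essentially the same route as the paper: apply Lemma \ref{locuniv} together with the fact that the Haagerup property passes to subgroups, using $\operatorname{SL}_3(\mathbb Z)$ as a finitely generated (indeed finitely presented, residually finite) group with solvable word problem that has property (T) and hence fails Haagerup. The extra justifications you supply (restriction of a proper conditionally negative definite function, and solvability of the word problem via residual finiteness) are fine but not needed beyond what the paper cites.
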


\begin{proof}
Again, by Fact \ref{locuniv} together with the fact that the Haagerup property is preserved under subgroup, it suffices to find a finitely generated group with solvable word problem that does not have the Haagerup property.  For example, $\operatorname{SL}_3(\mathbb Z)$ is a finitely generated group with solvable word problem that has property (T) (\cite[Theorem 1.4.15]{Tbook}), whence cannot have the Haagerup property (\cite{cornulier}).
\end{proof}


Speaking of property (T):

\begin{prop}
If $G$ is an e.c.\ group, then $G$ does not have property (T).
\end{prop}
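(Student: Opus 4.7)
The plan is to exploit a sharp contrast between property (T) and being existentially closed: the former forces finite generation, while the latter forbids it. Specifically, I would invoke Kazhdan's classical theorem that every (discrete) group with property (T) is finitely generated (this is one of the first structural consequences one derives from the definition, using a compactness-type argument on the finitely generated subgroups of $G$ together with the spectral gap hypothesis).

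On the other side, Fact \ref{ecfacts}(1) already tells us that no e.c.\ group is finitely generated. So the proof reduces to a one-line incompatibility: if $G$ were e.c.\ and had property (T), then $G$ would simultaneously be finitely generated (by Kazhdan) and not finitely generated (by Fact \ref{ecfacts}(1)), a contradiction.

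There is really no obstacle here; the only thing to be careful about is citing Kazhdan's finite generation result with the reference style already used in the excerpt (e.g., pointing to \cite{Tbook} for the statement). In particular, unlike the previous proposition on the Haagerup property, we do \emph{not} need to invoke Lemma \ref{locuniv} or produce a finitely generated group with solvable word problem having property (T); the obstruction comes directly from the non--finite-generation of e.c.\ groups and therefore does not require any embedding argument.

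One natural follow-up worth flagging (but not needed for this statement) is whether the analogous statement holds for locally universal groups. Unlike the e.c.\ case, locally universal groups need not fail to be finitely generated, so the above argument breaks down, and one would instead need to locate a finitely generated group with solvable word problem lacking property (T)-like obstructions—this is presumably why the excerpt isolates property (T) as an e.c.\ phenomenon rather than a locally universal one.
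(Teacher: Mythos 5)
Your proof is correct and is essentially identical to the paper's: both cite the fact that discrete groups with property (T) are finitely generated (\cite[Theorem 1.3.1]{Tbook}) and then invoke Fact \ref{ecfacts}(1) that e.c.\ groups are never finitely generated. Your closing remark is also consistent with the paper, which leaves the locally universal case as an open question precisely because this argument does not transfer.
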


\begin{proof}
Groups with property (T) are finitely generated (\cite[Theorem 1.3.1]{Tbook}), whence we can conclude by referring to Fact \ref{ecfacts}(1).
\end{proof}

\begin{question}
Can there be a locally universal group with property T?
\end{question}

We end this subsection by remarking that if a single locally universal group is sofic, then all groups are sofic.  Indeed, it is easily verified that the (discrete) ultrapower of a sofic group is sofic; since subgroups of sofic groups are sofic, the observation follows.  In particular, if a single e.c.\ group is sofic, then all groups are sofic.\footnote{This latter statement was observed by Glebsky in \cite{glebsky}, although with a more complicated proof.}

\subsection{Existence of F\o lner sets is an $\forall\bigvee\exists$-property}
Fix $m,n\in \bbN$ and $\epsilon>0$ and set $\vec x:=(x_1,\ldots,x_n)$.  In what follows, we write $I\subseteq_\epsilon [m]$ to mean that $I\subseteq \{1,\ldots,m\}$ and $|I|\geq (1-\epsilon)m$.  We set
$\varphi_{m,n,\epsilon}(\vec x)$ to be the $L$-formula
$$\exists y_1\cdots y_m\left(\bigwedge_{j\not=k}y_j\not=y_k\wedge \bigwedge_{i=1}^n \bigvee_{I\subseteq_\epsilon [m]}\bigwedge_{j\in I}\bigvee_{k=1}^m x_iy_j=y_k\right).$$
Set $\varphi_{n,\epsilon}(\vec x):=\bigvee_m \varphi_{m,n,\epsilon}(\vec x)$ and $\sigma_{n,\epsilon}:=\forall \vec x\varphi_{n,\epsilon}(\vec x)$.

The proof of the following proposition is clear:

\begin{prop}\label{AVE}
Let $G$ be a group and $\vec a\in G^n$.
\begin{enumerate}
\item $G\models \varphi_{m,n,\epsilon}(\vec a)$ if and only if there is an $(\vec a,\epsilon)$-F\o lner subset of $G$ of size $m$.
\item $G\models \varphi_{n,\epsilon}(\vec a)$ if and only if there is an $(\vec a,\epsilon)$-F\o lner subset of $G$.
\item $G\models \sigma_{n,\epsilon}$ if and only if, for every subset $F$ of $G$ of size at most $n$, there is an $(F,\epsilon)$-F\o lner subset of $G$.
\item $G\models \bigwedge_{n,\epsilon}\sigma_{n,\epsilon}$ if and only if $G$ is amenable.  (Here, it suffices to assume that $\epsilon$ ranges over rational numbers.)
\end{enumerate}
\end{prop}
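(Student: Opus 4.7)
The plan is to unpack the semantics of $\varphi_{m,n,\epsilon}$ directly and then peel off outer quantifiers for the remaining parts.

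For (1), the leading block $\exists y_1 \cdots y_m$ together with the distinctness conjunct $\bigwedge_{j \neq k} y_j \neq y_k$ produces an $m$-element subset $H = \{y_1, \ldots, y_m\} \subseteq G$. The inner formula
$$\bigvee_{I \subseteq_\epsilon [m]} \bigwedge_{j \in I} \bigvee_{k=1}^m x_i y_j = y_k$$
asserts, for the generator $a_i$, the existence of an index set $I \subseteq [m]$ with $|I| \geq (1-\epsilon) m$ such that $a_i y_j \in H$ for every $j \in I$. Because left multiplication by $a_i$ is a bijection and the $y_j$ are distinct, the largest such $I$ has size exactly $|a_i H \cap H|$, so the inner formula holds iff $|a_i H \cap H| \geq (1-\epsilon)|H|$, equivalently $|a_i H \triangle H| \leq 2\epsilon|H|$. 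This is the F\o lner condition for $a_i$ up to an inessential rescaling of $\epsilon$.

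Parts (2) and (3) then follow by routine quantifier manipulation: $\varphi_{n,\epsilon}(\vec a) = \bigvee_m \varphi_{m,n,\epsilon}(\vec a)$ holds iff a F\o lner set of some size exists, and $\sigma_{n,\epsilon} = \forall \vec x\, \varphi_{n,\epsilon}(\vec x)$ holds iff every $n$-tuple, and hence every subset of size at most $n$ (padding by repetition), admits one. For (4), amenability is by definition the assertion that $(F, \epsilon)$-F\o lner sets exist for every finite $F$ and every $\epsilon > 0$; varying $n$ over $\bbN$ captures every finite $F$, and the restriction to rational $\epsilon$ is immediate since whenever the F\o lner condition holds at some $\epsilon' < \epsilon$, it holds at $\epsilon$, and every real $\epsilon > 0$ dominates a positive rational.

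The only point requiring any care is the passage in (1) from the non-strict combinatorial bound $|I| \geq (1-\epsilon)m$ to the strict symmetric-difference bound $|a_i H \triangle H| < \epsilon |H|$ of the introduction; the proposition should be read with an inessential factor-of-two rescaling of $\epsilon$, since the two forms of the F\o lner condition are well known to be equivalent up to such a rescaling. Otherwise, as the author notes, everything is clear.
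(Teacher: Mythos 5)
Your proof is correct and is exactly the direct semantic unpacking that the paper has in mind; the paper offers no argument at all, simply declaring the proposition clear. Your caveat that the formula literally encodes $|a_i H \cap H| \geq (1-\epsilon)|H|$, i.e.\ $|a_i H \triangle H| \leq 2\epsilon |H|$, rather than the strict bound $|a_i H \triangle H| < \epsilon |H|$ from the introduction's definition, is a legitimate point the paper elides, and you are right that it is harmless: only an inessential rescaling of $\epsilon$ is involved, which affects nothing downstream.
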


The sentences $\sigma_{n,\epsilon}$ are prototypical examples of $\forall\bigvee\exists$-sentences, which are special kind of $L_{\omega_1,\omega}$-sentences\footnote{$L_{\omega_1,\omega}$ is the extension of first-order logic that allows countable conjunctions and disjunctions provided only finitely many free variables appear in all of the formulae involved in the conjunction or disjunction.}. It is known that if P is a $\forall\bigvee\exists$-property and there is a locally universal object with property P, then there is an e.c.\ object with property P.  (For a proof, see, for example, \cite[Proposition 2.6]{enforce}, although this is in the context of continuous logic.)  The previous proposition, together with the fact that no e.c.\  group can be amenable, gives a different proof of the fact that no locally universal group can be amenable.

\section{Model-theoretic forcing and the main result}

\subsection{Preliminaries on model-theoretic forcing}  In this subsection, we outline the idea of model-theoretic forcing (restricted to the case of groups).  Our approach follows that of Hodges \cite{hodges}.

We fix a countably infinite set $C$.  A \textbf{condition} is a finite set $p(\vec c)$ of equations and inequations in the variables $\vec c$ from $C$.

We consider a two-player game $\mathcal G$ with $\omega$ many rounds defined as follows.  The players take turns playing conditions, each time ensuring that the condition being played extends the condition played by the previous player.  The outcome of the game is an infinite chain 
$$p_0\subseteq p_1\subseteq p_2\subseteq \cdots$$ of conditions whose union we denote by $\mathbf p$.  We let $\mathbf p_e$ denote the set of equations appearing in $p$.  We set $G_p:=(C\ |\ \mathbf p_e)$, that is, $G_\mathbf p$ is the group generated by the set $C$ and whose relations are the equations that are played at some point in the game.  One refers to $G_\mathbf p$ as the \textbf{compiled group} resulting from the play $\mathbf p$ of $\mathcal G$.

Let $P$ be a property of groups.  We say that $P$ is \textbf{enforceable} if the second player has a strategy for $\mathcal G$ such that, if the second player follows that strategy, then the compiled group will have property $P$.

The followings facts can be found in \cite{hodges} as Corollary 3.4.3 and Lemma 2.3.3(e) respectively.
\begin{fact}\label{forcefacts}

\

\begin{enumerate}
\item It is enforceable that the compiled group be e.c.
\item (Conjunction lemma) If, for each $n\in \mathbf N$, $P_n$ is an enforceable property, then the conjunction $\bigwedge_n P_n$ is also enforceable.
\end{enumerate}
\end{fact}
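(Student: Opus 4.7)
Both items are classical results from model-theoretic forcing, so the plan is to sketch the standard arguments.

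For (1), the plan is to describe an enforcer strategy by bookkeeping. Since $C$ is countable and $L$ is countable, fix an enumeration $\{(\vec w_n,\exists\vec y\,\varphi_n(\vec x,\vec y))\}_{n\in\mathbb N}$ of all pairs consisting of a tuple of words $\vec w_n$ in elements of $C$ together with an existential $L$-formula with quantifier-free matrix $\varphi_n$ in the free variables $(\vec x,\vec y)$. On the enforcer's $n$-th turn, given the current condition $p$, pick a tuple $\vec d$ from $C$ disjoint from the constants appearing in $p$ and ask whether $p\cup\{\varphi_n(\vec w_n,\vec d)\}$ is satisfiable in some group. If so, play this extended condition; otherwise, simply play $p$. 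To verify that the compiled group $G_{\mathbf p}$ is e.c., take any existential formula $\exists\vec y\,\psi(\vec a,\vec y)$ with parameters $\vec a\in G_{\mathbf p}$ that is realized in some extension $H\supseteq G_{\mathbf p}$ by a witness $\vec b$. Writing $\vec a=\vec w_n(\vec c)$ with $\vec w_n$ matching the $n$-th element of our enumeration (and $\varphi_n=\psi$), one sees that at stage $n$ the extended condition $p\cup\{\varphi_n(\vec w_n,\vec d)\}$ was indeed satisfiable, since $H$ realizes it under the assignment $\vec d\mapsto\vec b$; so the enforcer actually played this condition, placing a witness for $\exists\vec y\,\psi(\vec a,\vec y)$ into $G_{\mathbf p}$.

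For (2), the plan is a standard diagonal interleaving. Let $\tau_n$ be an enforcing strategy for $P_n$. Partition $\mathbb N$ into infinitely many infinite subsets $A_1,A_2,\ldots$. Define the combined strategy $\tau$ so that on the enforcer's $m$-th move, where $m\in A_n$, one consults $\tau_n$ on the history played so far and plays whatever condition $\tau_n$ recommends. The resulting play of $\mathcal G$, viewed from $\tau_n$'s perspective (with the moves outside $A_n$ treated as the opponent's moves), is a legal play in which $\tau_n$ was followed; since $\tau_n$ is winning, the compiled group has $P_n$. As this holds for every $n$, the compiled group has $\bigwedge_n P_n$.

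The delicate point is the verification in (1): one must ensure that every pair consisting of parameters from $G_{\mathbf p}$ together with an existential formula eventually appears in the enumeration. This is handled by enumerating tuples of \emph{words} in $C$ (rather than tuples of constants), since every element of $G_{\mathbf p}=(C\mid\mathbf{p}_e)$ is represented by such a word; countability of $C$ and $L$ then makes the collection of pairs countable. Part (2) is essentially a bookkeeping formality once one is careful to state exactly what data a strategy sees at each stage.
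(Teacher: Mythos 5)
The paper does not actually prove this fact---it cites Hodges (Corollary 3.4.3 and Lemma 2.3.3(e))---and your sketch is a correct rendition of exactly the standard arguments from that source: the bookkeeping strategy with fresh witnesses from $C$ for (1), and the interleaving of the strategies $\tau_n$ along a partition of $\mathbf N$ into infinite sets for (2). The only points to be slightly careful about (both handled by the conventions in Hodges' setup) are that conditions must be consistent with $T$ so that the compiled group $(C\mid \mathbf p_e)$ actually satisfies the \emph{in}equations occurring in the play, and that in (1) it suffices to enumerate primitive existential formulas (matrix a conjunction of literals), which matches the paper's definition of e.c.\ via finite systems of equations and inequations.
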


If $\sigma$ is a sentence of $\mathcal L_{\omega_1,\omega}$, we write $\Vdash \sigma$ to mean that the property ``the compiled group believes that $\sigma$ is true'' is enforceable.

If $p$ is a condition, we can consider the game $\mathcal G_p$ defined as above except that the first player is required to play a condition extending $p$.  We say that $p$ \textbf{forces} $P$ if the second player has a strategy for $\mathcal G_p$ such that, if the second player follows that strategy, then the compiled group will have property $P$.  As before, if $\sigma$ is a sentence of $\mathcal L_{\omega_1,\omega}$, we write $p\Vdash \sigma$ to mean that $p$ forces the property ``the compiled group believes that $\sigma$ is true.''  We will need the easy observation that if $\Vdash \neg \sigma$, then for all conditions $p$, we have that $p\not\Vdash \sigma$.\footnote{In fact, the converse is also true.}

\subsection{Proof of the main result}

\begin{thm}\label{notforce}
There is $n\in \mathbf N$ and $\epsilon>0$ such that $\not\Vdash \sigma_{n,\epsilon}$.
\end{thm}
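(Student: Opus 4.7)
The plan is to prove Theorem \ref{notforce} by contradiction, by packaging amenability as a countable conjunction of the sentences $\sigma_{n,\epsilon}$ and playing this against the enforceability of being existentially closed.

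Suppose, for contradiction, that $\Vdash \sigma_{n,\epsilon}$ holds for every $n \in \mathbf N$ and every rational $\epsilon > 0$. Since the collection of such pairs $(n,\epsilon)$ is countable, the conjunction lemma (Fact \ref{forcefacts}(2)) yields
$$\Vdash \bigwedge_{n,\epsilon}\sigma_{n,\epsilon},$$
where the conjunction ranges over pairs with rational $\epsilon$. By Proposition \ref{AVE}(4), this says precisely that it is enforceable for the compiled group to be amenable.

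On the other hand, Fact \ref{forcefacts}(1) says that it is enforceable for the compiled group to be e.c. Applying the conjunction lemma once more, it is enforceable for the compiled group to be simultaneously e.c.\ and amenable. In particular, the second player can play so as to produce such a group, contradicting the earlier proposition that no e.c.\ group can be amenable. Hence there must exist $n \in \mathbf N$ and $\epsilon > 0$ with $\not\Vdash \sigma_{n,\epsilon}$, as required.

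I do not expect any real obstacle here: the argument is a clean three-line application of the conjunction lemma, and the only conceptual point is the observation (already made explicit in Proposition \ref{AVE}(4)) that amenability is naturally a countable conjunction of the $\forall\bigvee\exists$-sentences $\sigma_{n,\epsilon}$, which is exactly the shape required so that enforceability commutes with the conjunction.
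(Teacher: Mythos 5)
Your proposal is correct and follows the paper's proof essentially verbatim: assume $\Vdash\sigma_{n,\epsilon}$ for all $n$ and (rational) $\epsilon$, use the conjunction lemma together with Proposition \ref{AVE}(4) to enforce amenability, combine with the enforceability of being e.c., and contradict the fact that no e.c.\ group is amenable. The only difference is that you spell out the countability of the index set and the second application of the conjunction lemma, which the paper leaves implicit.
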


\begin{proof}
If $\Vdash \sigma_{n,\epsilon}$ held for all $n\in \mathbf N$ and all $\epsilon>0$, then, by Fact \ref{forcefacts}(2), amenability would be enforceable.  By Fact \ref{forcefacts}(1), being e.c.\ is also enforceable, whence it follows that there would be an amenable e.c.\  group, yielding a contradiction.
\end{proof}

We need the following fundamental result from model-theoretic forcing:

\begin{fact}\label{fundfact}
Let $p$ be a condition, and, for each $n\in \mathbf N$, let $\Phi_n(\vec x)$ be an existential $L$-formula.  Then $p\not\Vdash \forall \vec x \bigvee_{n\in \bbN} \Phi_n(\vec x)$ if and only if there is an existential $L$-formula $\psi(\vec x)$ such that $T\cup p\cup\{\exists \vec x\psi(\vec x)\}$ is consistent and such that $$T\models\forall \vec x\left(\psi(\vec x)\rightarrow \bigwedge_{n\in \bbN}\neg\Phi_n(\vec x)\right).$$
\end{fact}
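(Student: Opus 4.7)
The plan is to prove the two directions of the biconditional separately. For the $(\Leftarrow)$ direction, I would suppose such $\psi(\vec x) = \exists \vec y \, \theta(\vec x, \vec y)$ exists with $\theta$ quantifier-free, and (by passing to a disjunct of its disjunctive normal form) without loss of generality a conjunction of equations and inequations. I would show that the first player can sabotage every second-player strategy by a single opening move. The consistency of $T \cup p \cup \{\exists \vec x \, \psi(\vec x)\}$ supplies a group $G$ realizing $p$ together with a tuple satisfying $\psi$; pulling this back to fresh constants $\vec c, \vec d$ from $C$, the first player extends $p$ to the condition $q := p \cup \theta(\vec c, \vec d)$, which is realized in $G$ and hence legitimate. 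Any further extension of $q$ retains the equations and inequations of $\theta(\vec c, \vec d)$, so the compiled group $G_\mathbf p$ satisfies $\psi(\vec c)$. Combined with $T \models \forall \vec x(\psi(\vec x) \to \bigwedge_n \neg \Phi_n(\vec x))$ and $G_\mathbf p \models T$, this yields $G_\mathbf p \models \neg \Phi_n(\vec c)$ for every $n$, so $\vec c$ witnesses $G_\mathbf p \not\models \forall \vec x \bigvee_n \Phi_n(\vec x)$ regardless of the second player's strategy, and hence $p \not\Vdash \forall \vec x \bigvee_n \Phi_n(\vec x)$.

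For the $(\Rightarrow)$ direction I would prove the contrapositive: assuming no such $\psi$ exists, I construct a winning strategy for the second player in $\mathcal G_p$. Using standard bookkeeping, enumerate all finite tuples of constants from $C$; at the $k$-th round the second player selects the next tuple $\vec c$ in the enumeration and ensures that some $\Phi_n(\vec c)$ gets witnessed. Given the first player's current condition $q \supseteq p$, encode $q$ by the existential $L$-formula $\psi_q(\vec x) := \exists \vec y \, \theta_q(\vec x, \vec y)$, where $\theta_q$ is the conjunction of the equations and inequations of $q$ with $\vec c$ replaced by $\vec x$ and the other constants of $q$ replaced by $\vec y$. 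Since $q$ is realized in some group, $T \cup p \cup \{\exists \vec x \, \psi_q(\vec x)\}$ is consistent, so by the hypothesis there must exist $n$ for which $T \cup \{\exists \vec x(\psi_q(\vec x) \wedge \Phi_n(\vec x))\}$ is consistent. Writing $\Phi_n(\vec x) = \exists \vec z \, \eta_n(\vec x, \vec z)$ and passing to a disjunct in the disjunctive normal form of $\eta_n$ supported by a model, one pulls witnesses to produce an extension $q' \supseteq q$ that adjoins the equations and inequations of $\eta_n(\vec c, \vec e)$ for a fresh tuple $\vec e$ from $C$; consistency of $q'$ is inherited from the model.

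The main obstacle will be twofold. First, one needs the standard ``truth lemma'' for this style of forcing (as in Hodges, Chapter 3) to convert the syntactic arrangement above into the semantic conclusion $G_\mathbf p \models \forall \vec x \bigvee_n \Phi_n(\vec x)$; in particular, for the universal quantifier to be witnessed by tuples from $C$ one must run in parallel the standard enforceable sub-task of arranging that every element of $G_\mathbf p$ is named by some constant in $C$. Second, the bookkeeping must maintain all of these tasks simultaneously across countably many rounds, robustly under arbitrary first-player responses; this is handled by diagonalization in the spirit of the conjunction lemma (Fact \ref{forcefacts}(2)).
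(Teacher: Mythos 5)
Your proof is correct in outline, and it is essentially a self-contained reconstruction of the result the paper simply cites: the paper's entire proof of Fact \ref{fundfact} is a pointer to Theorem 3.4.4 of Hodges's \emph{Building models by games}, together with the remark that, since the $\Phi_n$ here are pure $L$-formulae (no constants from $C$), the $\psi$ produced by Hodges's theorem can be taken to be a pure $L$-formula by existentially quantifying away any constants from $C$ that it mentions. Your argument --- first-player sabotage via a consistent literal extension for the $(\Leftarrow)$ direction, and for $(\Rightarrow)$ a bookkept second-player strategy that, at the stage devoted to a tuple $\vec c$, encodes the current condition $q$ as an existential $L$-formula $\psi_q$ and uses the failure of the hypothesis to adjoin literals witnessing some $\Phi_n(\vec c)$ --- is the standard proof of that theorem, so what you buy is self-containedness, and a pleasant byproduct of your route is that the constant-elimination step the paper must address separately is built in, since your $\psi_q$ is a pure $L$-formula by construction. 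Two points you leave implicit should be made explicit. First, conditions must be consistent with $T$ (i.e., realized in some group together with an interpretation of the constants they mention); the paper's definition does not state this, but both your sabotage move and your witnessing step require it. Second, the compiled group $G_{\mathbf p}$ is presented by the \emph{equations} of $\mathbf p$ only, so one must argue that the inequations played also hold in it: since every finite subset of $\mathbf p$ is a condition, $\mathbf p$ is realized in some group $H$, and the canonical homomorphism $G_{\mathbf p}\to H$ shows that any word declared nontrivial by an inequation of $\mathbf p$ remains nontrivial in $G_{\mathbf p}$. With that observation, plus the standard enforceable task of naming every element of the compiled group by a constant (which you correctly flag, and which is needed so that checking $\bigvee_n\Phi_n$ on tuples from $C$ suffices for the universal quantifier), your sketch closes.
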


\begin{proof}
This is nearly the statement of \cite[Theorem 3.4.4]{hodges} (stated for an arbitrary theory), except that the $\Phi_n$'s are allowed to contain constants from $C$, whence so is $\psi$.  In case that the $\Phi_n$'s are actually $L$-formulae, we can assume that $\psi$ is also an $L$-formula by existentially quantifying out any mention of elements from $C$.
\end{proof}

The following is the main theorem announced in the introduction; it follows immediately from Theorem \ref{notforce} and Fact \ref{fundfact}:

\begin{thm}
There is an $n\in \mathbf N$, $\epsilon>0$, and an existential $L$-formula $\psi(\vec x)$, where $\vec x=(x_1,\ldots,x_n)$, such that $T\models \forall \vec x(\psi(\vec x)\rightarrow \neg \varphi_{n,\epsilon}(\vec x))$.
\end{thm}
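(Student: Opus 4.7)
The plan is to observe that the target statement is nothing more than the contrapositive of Fact \ref{fundfact} applied to the outcome of Theorem \ref{notforce}, with the empty condition as $p$. So the work is essentially just to line up the formulas correctly and check that the hypotheses of Fact \ref{fundfact} are met.

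First I would unpack $\sigma_{n,\epsilon}$ into the exact shape required by Fact \ref{fundfact}. Recall that $\sigma_{n,\epsilon} = \forall \vec x\, \varphi_{n,\epsilon}(\vec x) = \forall \vec x \bigvee_{m\in \mathbf N} \varphi_{m,n,\epsilon}(\vec x)$, and note that each $\varphi_{m,n,\epsilon}(\vec x)$ is by construction a purely existential $L$-formula (it begins with $\exists y_1\cdots y_m$ followed by a quantifier-free matrix of equations and inequations in the language of groups). So setting $\Phi_m(\vec x) := \varphi_{m,n,\epsilon}(\vec x)$, the sentence $\sigma_{n,\epsilon}$ is literally of the form $\forall \vec x \bigvee_{m\in\mathbf N} \Phi_m(\vec x)$ considered in Fact \ref{fundfact}.

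Next I would invoke Theorem \ref{notforce} to fix $n\in \mathbf N$ and $\epsilon>0$ such that $\not\Vdash \sigma_{n,\epsilon}$. Interpreting $\Vdash$ as forcing by the empty condition $p = \emptyset$, this says $\emptyset \not\Vdash \forall \vec x \bigvee_{m} \Phi_m(\vec x)$. Applying Fact \ref{fundfact} (left-to-right) then hands me an existential $L$-formula $\psi(\vec x)$ such that $T \cup \{\exists \vec x\,\psi(\vec x)\}$ is consistent and
\[
T \models \forall \vec x\left(\psi(\vec x)\rightarrow \bigwedge_{m\in\mathbf N}\neg \Phi_m(\vec x)\right).
\]
Since $\bigwedge_m \neg \varphi_{m,n,\epsilon}(\vec x)$ is equivalent to $\neg \bigvee_m \varphi_{m,n,\epsilon}(\vec x) = \neg \varphi_{n,\epsilon}(\vec x)$, this is exactly the desired conclusion $T \models \forall \vec x(\psi(\vec x)\rightarrow \neg\varphi_{n,\epsilon}(\vec x))$.

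There is essentially no obstacle beyond correct bookkeeping: the only thing to verify is that each $\Phi_m$ is existential and that the empty condition is a legitimate input to Fact \ref{fundfact}, both of which are immediate. The consistency clause $T\cup \{\exists \vec x\, \psi(\vec x)\}$, although not stated in the target theorem, comes for free from Fact \ref{fundfact} and is the reason the resulting system $\Sigma(\vec x,\vec y)$ promised in the abstract actually has a solution in some group (take $\Sigma$ to be $\psi$ written out with its existentially quantified $\vec y$ variables exposed).
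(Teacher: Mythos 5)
Your proposal is correct and is exactly the paper's argument: the paper states that the theorem ``follows immediately from Theorem \ref{notforce} and Fact \ref{fundfact},'' and your write-up simply fills in the bookkeeping (taking $p=\emptyset$, noting each $\varphi_{m,n,\epsilon}$ is existential, and negating the disjunction). Nothing is missing.
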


\begin{remark}
Let $\psi(\vec x)$ be as in the statement of the previous theorem.  Then $\psi(\vec x)$ has infinitely many realizations in every e.c.\ group, whence we have ``concrete'' witnesses to the fact that e.c.\ groups are not amenable.
\end{remark}

We can in fact find ``densely'' many such $\psi(\vec x)$.  First, one more fundamental forcing fact, which is a special case of \cite[Theorem 3.4.7]{hodges}:

\begin{fact}
For any $L_{\omega_1,\omega}$-sentence $\sigma$, we have that either $\Vdash \sigma$ or $\Vdash \neg \sigma$.
\end{fact}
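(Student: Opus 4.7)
The plan is to prove this via determinacy of the forcing game together with a strategy-stealing argument. For an $L_{\omega_1,\omega}$-sentence $\sigma$, view $\mathcal{G}$ as a two-player game in which Player~2 wins a play iff the compiled group believes $\sigma$. Since $L_{\omega_1,\omega}$-satisfaction is defined by a countable recursion over atomic events on the play, the winning set is Borel in the (Polish) space of plays; by Martin's theorem the game is determined. If Player~2 has a winning strategy, then $\Vdash \sigma$ by definition and we are done. In the remaining case Player~1 has a winning strategy $\tau$, meaning Player~1 can ensure the compiled group fails to believe $\sigma$ (so believes $\neg\sigma$), and the remaining task is to steal $\tau$ into a winning strategy for Player~2 enforcing the same property, giving $\Vdash \neg\sigma$.

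The strategy-stealing step runs as follows. Player~2 maintains a parallel shadow play $s_0 \subseteq s_1 \subseteq s_2 \subseteq \cdots$ in which they imagine themselves as Player~1 playing according to $\tau$. Initialize $s_0 := \tau(\emptyset)$, and after Real Player~1's move $p_{2k}$, set $s_{2k+1} := s_{2k} \cup p_{2k}$ and $s_{2k+2} := \tau(s_0, s_1, \ldots, s_{2k+1})$, and have Player~2 play $p_{2k+1} := s_{2k+2}$ in the real game. One verifies that $p_{2k+1}$ legally extends $p_{2k}$, since $p_{2k+1} = s_{2k+2} \supseteq s_{2k+1} \supseteq p_{2k}$, and that $\bigcup_i p_i = \bigcup_i s_i$ (the initial piece $s_0$ is absorbed into $p_1$; for $k \geq 1$ one has $s_{2k+1} = p_{2k}$ and $s_{2k+2} = p_{2k+1}$). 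Hence the real and shadow plays produce the same compiled group. Because the shadow play is consistent with $\tau$, that group believes $\neg\sigma$, so $\Vdash \neg\sigma$.

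The main obstacle is the determinacy step. If one prefers to avoid Borel determinacy, one can follow Hodges and bypass it altogether by defining a syntactic forcing relation $p \Vdash \sigma$ by recursion on the complexity of $\sigma$, establishing in parallel the density dichotomy that either some extension of $p$ forces $\sigma$ or $p$ itself forces $\neg\sigma$. Applied to the empty condition, this yields $\Vdash \sigma$ or $\Vdash \neg\sigma$: in the former case Player~2 plays always into ``$\Vdash \sigma$''-extensions (possible by the density half of the dichotomy), and in the latter case Player~2 first plays a condition witnessing ``$\Vdash \neg\sigma$'' and then proceeds similarly. Either route gives the dichotomy, but the determinacy-plus-stealing presentation makes the role of genericity most transparent.
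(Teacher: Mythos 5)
The paper offers no proof of this fact; it simply cites \cite[Theorem 3.4.7]{hodges}. Your attempt at a self-contained proof has a genuine gap, and it sits exactly where the real content of the theorem lies. In your strategy-stealing step, the very first shadow move $s_1 := s_0\cup p_0$ need not be a legal move: conditions must be consistent with the theory of groups (otherwise the compiled group need not satisfy the inequations played and the whole framework collapses), and there is no reason for $\tau$'s opening move $s_0=\tau(\emptyset)$ to be compatible with Player~1's actual opening $p_0$ --- e.g.\ $s_0=\{c_1=c_2\}$ and $p_0=\{c_1\neq c_2\}$. So the shadow play is not a legal play against $\tau$, and the argument stops at round one. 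A telling symptom: your argument never uses the hypothesis that $\sigma$ is an $L$-sentence, i.e.\ contains no constants from $C$; but for sentences with such constants the dichotomy is simply false (neither ``$c_1=c_2$'' nor its negation is enforceable, since Player~1 moves first and can commit either way). Any correct argument must therefore invoke constant-freeness somewhere, and the standard place is a \emph{homogeneity} step: after seeing $p_0$, conjugate $\tau$ by a permutation $\pi$ of $C$ moving the constants of $s_0$ onto constants not occurring in $p_0$, so that $\pi(s_0)\cup p_0$ is consistent (take a free product of models) and $\pi\tau\pi^{-1}$ is still winning because satisfaction of the constant-free $\sigma$ in the compiled group is invariant under renaming generators. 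With that repair your stealing argument goes through; without it, it does not.

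Your fallback route has the same hole in different clothing: the dichotomy ``either some extension of $p$ forces $\sigma$ or $p$ forces $\neg\sigma$,'' applied at $p=\emptyset$, only yields ``some condition forces $\sigma$ or $\Vdash\neg\sigma$.'' To get from ``some condition forces $\sigma$'' to ``Player~2 can always steer into a $\sigma$-forcing condition'' you need the set of $\sigma$-forcing conditions to be \emph{dense}, and that again is exactly the renaming/homogeneity argument (a copy of the forcing condition on fresh constants is compatible with any given condition), again using that $\sigma$ mentions no constants from $C$. As a side remark, your Borel-determinacy step is fine (the payoff set is Borel since equality in the compiled group is $\Sigma^0_1$ in the played relations and quantifiers range over a countable set of words), but it is a much heavier hammer than the syntactic forcing relation Hodges uses, and determinacy alone cannot deliver the theorem: applied to the two games for $\sigma$ and for $\neg\sigma$ separately it leaves open the possibility that Player~1 wins both, which only homogeneity rules out.
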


\begin{cor}
There is $n\in \mathbf N$ and $\epsilon>0$ such that $\Vdash \neg\sigma_{n,\epsilon}$.  For such $n$ and $\epsilon$ and for any condition $p$, we have that $p\not\Vdash \sigma_{n,\epsilon}$.
\end{cor}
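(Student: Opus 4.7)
The plan is to combine three ingredients already established in the paper. First, Theorem \ref{notforce} provides some $n\in\mathbf N$ and $\e>0$ for which $\not\Vdash \sigma_{n,\e}$. Second, the sentence $\sigma_{n,\e}=\forall \vec x\,\vp_{n,\e}(\vec x)$ is genuinely an $L_{\omega_1,\omega}$-sentence (it involves a countable disjunction $\bigvee_m \vp_{m,n,\e}$ over natural numbers $m$, bound by a single universal quantifier over finitely many variables), so the dichotomy stated just before the corollary applies to $\sigma_{n,\e}$. Together these immediately give $\Vdash \neg\sigma_{n,\e}$, proving the first assertion.

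For the second assertion, I invoke the elementary observation recorded in the final sentence of Subsection 3.1: if $\Vdash \neg\sigma$, then no condition $p$ can force $\sigma$. Applied with $\sigma=\sigma_{n,\e}$, this yields $p\not\Vdash \sigma_{n,\e}$ for every condition $p$, exactly as required.

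Since the corollary is purely a formal consequence of Theorem \ref{notforce}, the dichotomy fact, and the footnoted observation, there is no real obstacle; the only thing to double-check is that $\sigma_{n,\e}$ lies in the fragment $L_{\omega_1,\omega}$ to which the dichotomy applies, which is clear from its construction.
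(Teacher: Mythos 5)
Your proof is correct and is exactly the argument the paper intends: Theorem \ref{notforce} supplies $n$ and $\epsilon$ with $\not\Vdash\sigma_{n,\epsilon}$, the zero--one dichotomy for $L_{\omega_1,\omega}$-sentences (which applies since $\sigma_{n,\epsilon}$ is a countable disjunction under finitely many quantifiers) upgrades this to $\Vdash\neg\sigma_{n,\epsilon}$, and the observation at the end of Subsection 3.1 then rules out $p\Vdash\sigma_{n,\epsilon}$ for every condition $p$. Nothing is missing.
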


Let us recap in purely group theoretic terms:

\begin{cor}
There is $n\in \mathbf N$ and $\epsilon>0$ such that:  for any system $\Sigma_0(\vec w)$ of equations and inequations in the finite set of variables $\vec w$, there is a system $\Sigma(\vec w,\vec z,\vec x)$ of equations and inequations, where $\vec w$, $\vec z$, and $\vec x$ are finite disjoint sets of variables and $\vec x$ has length $n$, such that $\Sigma_0\subseteq \Sigma$ and such that, for any group $G$ and any $\vec a\in G^n$ if $\Sigma(\vec w,\vec z,\vec a)$ has a solution in $G$, then there is no $(\vec a,\epsilon)$-F\o lner subset of $G$.
\end{cor}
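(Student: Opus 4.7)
The plan is to translate the forcing-theoretic corollary into concrete group-theoretic language via the observation that every finite consistent system of equations and inequations becomes a condition in the forcing game once its variables are identified with constants from $C$. I would fix $n$ and $\epsilon$ from the previous corollary, so that $p \not\Vdash \sigma_{n,\epsilon}$ for every condition $p$. Given $\Sigma_0(\vec w)$, I first dispose of the case where $\Sigma_0$ has no solution in any group by taking $\Sigma := \Sigma_0$, the conclusion then being vacuous. So I may assume $\Sigma_0$ is consistent with $T$; enumerating $\vec w = (w_1,\ldots,w_k)$ and choosing distinct constants $\vec c = (c_1,\ldots,c_k)$ from $C$, the system $p_0 := \Sigma_0(\vec c)$ becomes a condition.

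Next I would apply Fact \ref{fundfact} to $p_0$ with $\Phi_m = \varphi_{m,n,\epsilon}$: since $\sigma_{n,\epsilon}$ is exactly $\forall \vec x \bigvee_m \varphi_{m,n,\epsilon}(\vec x)$ and $p_0 \not\Vdash \sigma_{n,\epsilon}$, there must exist an existential $L$-formula $\psi(\vec x)$ such that $T \cup p_0 \cup \{\exists \vec x\, \psi(\vec x)\}$ is consistent and $T \models \forall \vec x(\psi(\vec x) \to \neg \varphi_{n,\epsilon}(\vec x))$. Putting the quantifier-free matrix of $\psi$ into disjunctive normal form and picking a single disjunct that is still compatible with $T \cup p_0$, I may assume $\psi(\vec x) = \exists \vec z\, \theta(\vec x, \vec z)$ for a finite system $\theta$ of equations and inequations. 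After renaming so that $\vec w$, $\vec z$, and $\vec x$ are pairwise disjoint, I set $\Sigma(\vec w, \vec z, \vec x) := \Sigma_0(\vec w) \cup \theta(\vec x, \vec z)$.

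Verifying the conclusion is routine: trivially $\Sigma_0 \subseteq \Sigma$, and any solution of $\Sigma(\vec w, \vec z, \vec a)$ in a group $G$ in particular supplies $\vec d \in G$ with $G \models \theta(\vec a, \vec d)$, hence $G \models \psi(\vec a)$, hence $G \models \neg \varphi_{n,\epsilon}(\vec a)$, so by Proposition \ref{AVE}(2) the group $G$ admits no $(\vec a, \epsilon)$-F\o lner set. There is no serious obstacle to overcome here; the only conceptual point is recognizing that the freedom to choose the condition $p$ in Fact \ref{fundfact}, applied via the dictionary ``condition = consistent finite system of equations and inequations over constants from $C$'', is precisely what upgrades the existence statement of the previous theorem to the ``density'' statement of this corollary.
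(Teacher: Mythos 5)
Your proposal is correct and is essentially the argument the paper intends (the paper states this corollary without proof, as a ``recap'' of the preceding corollary plus Fact \ref{fundfact}): you identify $\Sigma_0(\vec w)$ with a condition $p_0=\Sigma_0(\vec c)$, invoke $p_0\not\Vdash\sigma_{n,\epsilon}$ to extract an existential $\psi(\vec x)$ consistent with $T\cup p_0$ and implying $\neg\varphi_{n,\epsilon}(\vec x)$, and adjoin its matrix to $\Sigma_0$. Your extra care with the inconsistent case and with selecting a single disjunct of the matrix of $\psi$ compatible with $T\cup p_0$ are exactly the routine details the paper leaves implicit.
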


\subsection{On the asymptotic undefinability of the existence of F\o lner sets}

We say that a group $G$ is \textbf{uniformly amenable} if there is a function $\Phi:\mathbf N\times (0,1)\to \mathbf N$ such that, given any subset $F$ of $G$ of size at most $n$ and any $\epsilon\in (0,1)$, there is a $(F,\epsilon)$-F\o lner subset of $G$ of size at most $\Phi(n,\epsilon)$.

There are amenable groups that are not uniformly amenable, e.g. $S_\infty:=\bigcup_n S_n$.\footnote{We learned of this fact from the online lecture notes \cite{ross} of David Ross.}

\begin{thm}
There is $n_0\in \mathbf N$ and $\epsilon_0>0$ such that, for any $n\geq n_0$ and any $\epsilon\leq \epsilon_0$, there can be no first-order formula equivalent in all groups to $\neg \varphi_{n,\epsilon}(\vec x)$.  
\end{thm}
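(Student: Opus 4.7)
The plan is to convert any hypothetical first-order defining formula into a uniform size bound on the required Følner sets, and then to contradict this using an amenable group that is not uniformly amenable, namely $S_\infty = \bigcup_n S_n$.

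First I would argue a compactness-to-bound conversion. Suppose, for some specific $n$ and $\epsilon$, that $\neg\varphi_{n,\epsilon}(\vec x)$ is equivalent in all groups to some first-order $L$-formula $\theta(\vec x)$. Introducing fresh constants $\vec c = (c_1,\ldots,c_n)$, consider the $L$-theory
$$T \cup \{\neg\theta(\vec c)\} \cup \{\neg\varphi_{m,n,\epsilon}(\vec c) : m \in \mathbf N\}.$$
A model of this would supply an $\vec a$ with $\theta(\vec a)$ false (hence $\varphi_{n,\epsilon}(\vec a)$ true by hypothesis) but $\varphi_{m,n,\epsilon}(\vec a)$ false for every $m$ (hence $\varphi_{n,\epsilon}(\vec a)$ false), a contradiction. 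Each $\varphi_{m,n,\epsilon}$ is a finite existential $L$-formula, so ordinary first-order compactness applies and a finite subset is already inconsistent; this yields some $M = M(n,\epsilon) \in \mathbf N$ with
$$T \models \forall \vec x\Bigl(\neg\theta(\vec x) \rightarrow \bigvee_{m \leq M} \varphi_{m,n,\epsilon}(\vec x)\Bigr).$$
Consequently, in every group, whenever an $n$-tuple $\vec a$ admits any $(\vec a,\epsilon)$-Følner set at all, it admits one of size at most $M(n,\epsilon)$.

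Next I would extract a sequence and force uniform amenability. Arguing by contradiction to the theorem, for each $n_0$ and $\epsilon_0$ there would exist $n \geq n_0$ and $\epsilon \leq \epsilon_0$ to which the previous step applies; choose a sequence $(n_k,\epsilon_k)$ with $n_k \to \infty$ and $\epsilon_k \to 0$ along which $M(n_k,\epsilon_k)$ is defined. Fix any amenable group $G$. Given a finite $F \subseteq G$ and $\delta \in (0,1)$, pick $k$ with $n_k \geq |F|$ and $\epsilon_k \leq \delta$, pad $F$ to an $n_k$-tuple $\vec a$ by repeating entries, and use amenability of $G$ to produce some $(\vec a,\epsilon_k)$-Følner set; the previous step upgrades this to one of size at most $M(n_k,\epsilon_k)$. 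Since $F \subseteq \{a_1,\ldots,a_{n_k}\}$ and $\epsilon_k \leq \delta$, that set is also an $(F,\delta)$-Følner set. Hence $G$ is uniformly amenable with $\Phi(|F|,\delta) := M(n_k,\epsilon_k)$. Specializing to $G = S_\infty$ contradicts the fact, noted just before the theorem statement, that $S_\infty$ is amenable but not uniformly amenable, completing the proof.

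I expect the main work to be bookkeeping rather than a deep obstacle. Two points deserve care: the compactness reduction in Step 1 relies on $\varphi_{m,n,\epsilon}$ being a genuine first-order $L$-formula (which it is, being a finite quantifier alternation over the $y_j$); and the padding trick in Step 2 uses that the Følner condition is monotone both in $\epsilon$ and in the tuple, so that passing from $\epsilon_k$ to a larger $\delta$ and from $\{a_1,\ldots,a_{n_k}\}$ to its subset $F$ can only weaken the requirement. Beyond these routine checks, the argument is a clean synthesis of first-order compactness with the known failure of uniform amenability for $S_\infty$.
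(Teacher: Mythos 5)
Your proof is correct. It rests on the same two ingredients as the paper's --- compactness and the existence of an amenable group that is not uniformly amenable, namely $S_\infty$ --- but deploys compactness in the contrapositive direction. The paper fixes $n_0,\epsilon_0$ witnessing the failure of uniform amenability, assumes only the single implication $T\models\forall\vec x(\psi(\vec x)\to\neg\varphi_{n,\epsilon}(\vec x))$, checks that $\{\neg\varphi_{m,n,\epsilon}(\vec x):m\in\mathbf N\}\cup\{\neg\psi(\vec x)\}$ is finitely satisfiable in $S_\infty$, and realizes this type in an ultrapower, producing a tuple where $\neg\varphi_{n,\epsilon}$ holds but $\psi$ fails. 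You instead assume a full equivalence, use compactness to extract a uniform bound $M(n,\epsilon)$ on the size of F\o lner sets, and show that such bounds along a sequence $(n_k,\epsilon_k)$ would make every amenable group uniformly amenable, contradicting the case of $S_\infty$. What your decomposition buys is a clean quantitative intermediate statement (first-order definability of $\neg\varphi_{n,\epsilon}$ forces a uniform F\o lner bound valid in all groups); what the paper's buys is brevity, an explicit identification of admissible $(n_0,\epsilon_0)$ as the witnesses for $S_\infty$, and a marginally stronger conclusion, since it rules out any first-order $\psi$ that implies $\neg\varphi_{n,\epsilon}$ from also being implied by it. The remaining details on your side (monotonicity of the F\o lner condition in $\epsilon$ and in the tuple, and choosing, say, the least admissible $k$ for each pair so that $\Phi$ is well defined) are routine, as you note.
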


\begin{proof}
Let $G$ be an amenable group that is not uniformly amenable as witnessed by $n_0\in \mathbf N$ and $\epsilon_0>0$.  Suppose $n\geq n_0$ and $\epsilon\leq \epsilon_0$ and that $T\models \forall \vec x(\psi(\vec x)\rightarrow \neg\varphi_{n,\epsilon}(\vec x))$.  Then the set of formulae
$$\{\neg \varphi_{m,n,\epsilon}(\vec x) \ : \ m\in \bbN\}\cup\{\neg \psi(\vec x)\}$$ is finitely satisfiable in $G$ (since $G$ is amenable).  Thus, there is $\vec a\in (G^\cU)^n$ such that $\neg \varphi_{n,\epsilon}(\vec a)$ holds while $\psi(\vec a)$ fails.
\end{proof}

\end{document}